\documentclass[reqno]{amsart}
\usepackage{amsthm, amscd, amsfonts,graphicx}
\usepackage{color}
\usepackage{enumerate}
\usepackage{verbatim}
\numberwithin{equation}{section}
\theoremstyle{plain}
 \newtheorem{theorem}{Theorem}[section]
 \newtheorem{lemma}[theorem]{Lemma}

\theoremstyle{definition}
 \newtheorem{definition}[theorem]{Definition}
 \newtheorem{remark}[theorem]{Remark}

\theoremstyle{remark}

\newenvironment{enumeratei}{\begin{enumerate}[\quad\upshape (i)]} {\end{enumerate}}
\newcommand \dom [1] {\textup{Dom}(#1)}
\newcommand \many{\boldsymbol\sigma}
\newcommand \smany{$\many$-many}
\newcommand \krlist {\mathcal L_{\textup{KR}}}
\newcommand \dual [1] {#1^{\boldsymbol\delta}}

\DeclareMathOperator \Sub {Sub}
\newcommand \leqs {\mathrel{\leq_{S}}}
\newcommand \leql {\mathrel{\leq_{L}}}
\newcommand \poset[1]{(#1;\leq)}
\newcommand \slat[1]{(#1;\vee)}
\newcommand \silat[2]{(#1;\vee_{#2})}
\newcommand \lattice[1]{(#1;\vee,\wedge)}
\newcommand \ofideal[1] {\mathord{\downarrow}_{#1}}
\newcommand \offilter[1] {\mathord{\uparrow}_{\kern-1pt #1}\kern 1pt}

\newcommand \Mi {M}
 
\newcommand \ubin[1] {U_{#1}}
\newcommand \nnul{\mathbb N_0}

\newcommand \then {\Rightarrow}

\newcommand\set [1]{\{#1\}}

\renewcommand \phi {\varphi}
\newcommand \url [1] {{\texttt{#1}}}


\newcommand \red [1] {{\color{red}#1\color{black}}}

\newcommand \nothing [1] {}

\newcommand \plunul [1] {#1^{\cup0}}

%
%
%
\begin{document}
\title[One hundred twenty-seven subsemilattices]
{One hundred twenty-seven subsemilattices and planarity}

\author[G.\ Cz\'edli]{G\'abor Cz\'edli}
\address{University of Szeged, Bolyai Institute, Szeged,
Aradi v\'ertan\'uk tere 1, Hungary 6720}
\email{czedli@math.u-szeged.hu}
\urladdr{http://www.math.u-szeged.hu/~czedli/}

\thanks{This research of was supported by the Hungarian Research, Development and Innovation Office under grant number KH 126581.\hskip5cm\red{July 1, 2019}}

\subjclass{06A12, 06B99, 20M10}

\keywords{Planar semilattice, planar lattice, subsemilattice, number of subalgebras, number of subuniverses, computer-assisted proof, finite semilattice}

\dedicatory{Dedicated to the memory of Ivo G.\ Rosenberg}

\begin{abstract} Let $L$ be a finite $n$-element semilattice. We prove that if $L$ has at least $127\cdot 2^{n-8}$ subsemilattices, then $L$ is planar. For $n>8$, this result is sharp since  there is a non-planar semilattice with exactly $127\cdot 2^{n-8}-1$ subsemilattices.
\end{abstract}

\maketitle

\section{Our result  and introduction} 
This paper is dedicated to the memory of \emph{Ivo G.\ Rosenberg} (1934--2018). In addition to his celebrated theorem describing the maximal clones of operations on a finite set,  
he has published many important results in many fields of mathematics. According to MathSciNet, thirteen of his papers belong to the category ``Order, lattices, ordered algebraic structures''; \cite{ChCzR} is one of these thirteen and it is a great privilege to me that I was included.

In the present paper,  \emph{semilattices} (without adjectives) are understood as \emph{join-semi\-lattices}. In spite of this convention, sometimes we write ``join-semilattice'' for emphasis. Note that Theorem~\ref{thmmain} below is valid also for  commutative idempotent semigroups, because they are, in a well-known sense, equivalent to  join-semilattices. A finite semilattice is said to be \emph{planar} if it has a Hasse diagram that is also a planar representation of a graph. Our goal is to prove that finite semilattices with many subsemilattices are planar. Namely, we are going to prove the following theorem.

\begin{theorem}\label{thmmain} Let $L$ be a finite semilattice, and let  $n:=|L|$ denote the number of its elements. If $L$ has at least $127\cdot 2^{n-8}$ subsemilattices, then it is a planar semilattice.  
\end{theorem}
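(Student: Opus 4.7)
The plan is to argue contrapositively: assume $L$ is non-planar and show that $L$ has at most $127\cdot 2^{n-8}-1$ subsemilattices. Since $127 = 2^7-1$ sits just under $2^8$, the natural strategy is to find an $8$-element subset $S\subseteq L$ whose presence already prevents planarity, and then to count subsemilattices of $L$ by their intersection with $S$.

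The first technical step is a Kuratowski-type structural input: a non-planar finite semilattice should contain, as a suitably embedded subsemilattice, a copy of one of a short list of $8$-element ``obstruction'' configurations. These should be the semilattice analogues of Kelly and Rival's minimal non-planar lattices, with the extra element (compared with the lattice case) reflecting the weaker closure condition (only $\vee$, not $\vee$ and $\wedge$). Producing such a list, and showing that the embedded $8$-element $S$ is itself a subsemilattice of $L$ whose join-interactions with $L\setminus S$ can be tracked, is the first hurdle; I expect this is handled in the paper's ``Lattice theoretical preparatory part.''

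Granted such an $S$, the counting step is bookkeeping. Every subsemilattice $U$ of $L$ decomposes as $U = (U\cap S)\cup(U\setminus S)$, and hence
\[
\bigl|\{U : U \text{ is a subsemilattice of } L\}\bigr| \;=\; \sum_{T\subseteq S}\bigl|\{V\subseteq L\setminus S : T\cup V \text{ is a subsemilattice of } L\}\bigr|.
\]
Each inner summand is at most $2^{n-8}$. It therefore suffices to prove that at most $127$ of the $256$ subsets $T\subseteq S$ arise as the restriction $U\cap S$ of some subsemilattice $U$ of $L$, together with a forced loss of at least one extension somewhere to account for the residual $-1$ in the bound. The non-planarity of $S$ is what buys us this: the $\vee$-constraints inside $S$ (which force new elements whenever two elements of $T$ are included) must rule out at least $129$ subsets of $S$ as feasible restrictions.

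The main obstacle, and the apparent reason for the paper's Computational part, is verifying this bound of $127$ on realizable restrictions. For each configuration on the Kuratowski-style list the verification is a finite but substantial check, since one must enumerate all $256$ candidate subsets of $S$ and track the $\vee$-constraints they impose on the rest of $L$; this is the sort of case analysis that is only practical with computer assistance. Sharpness for $n>8$ should then follow by taking a particular obstruction $S$ and attaching $n-8$ further elements in such a way that the resulting non-planar semilattice realizes exactly $127\cdot 2^{n-8}-1$ subsemilattices, with a single $(T,V)$-pair forbidden by the join-structure of $S$ contributing the $-1$.
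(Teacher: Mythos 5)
There is a genuine gap, and it sits exactly where you flag your ``first hurdle.'' Your counting scheme needs the obstruction $S$ to be an \emph{eight-element subsemilattice} of $L$, so that the trace $U\cap S$ of every subsemilattice $U$ is itself join-closed in $S$ and the number of realizable traces is bounded by $|\Sub\slat S|$. But the Kuratowski-type input that is actually available (the Kelly--Rival Theorem~\ref{thmKR}, applied after adjoining a bottom element to make $L$ a lattice) only produces an obstruction as a \emph{subposet}, not as a subsemilattice; moreover the list $\krlist$ is infinite, its members have $8,9,10,11,\dots$ elements, and the extremal one, $F_0$, has $9$ elements with exactly $254=127\cdot2$ subuniverses --- so the heuristic ``$127$ of $256$ subsets of an $8$-element set'' does not match the actual extremal configuration. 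When $S$ is merely a subposet, $U\cap S$ need not be closed under the join of $S$, the phrase ``realizable restriction'' loses its meaning, and your sum $\sum_{T\subseteq S}|\{V:\,T\cup V\text{ a subsemilattice}\}|$ gives only the trivial bound $2^{8}\cdot 2^{n-8}$. Bridging this gap is the entire content of the paper's Key Lemma~\ref{lemmakey}: if a semilattice $S$ is a subposet of a semilattice $L$, then $\many(S)\geq\many(L)$. Its proof is not bookkeeping; it adjoins the missing joins to $S$ one element at a time and constructs, at each step, an explicitly at-most-two-to-one surjection $\Sub\slat B\to\Sub\slat S$, which is where the factor of $2$ per added element comes from. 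In the special case where $S$ really is a subsemilattice, your decomposition is correct but then it merely reproves the easy Lemma~\ref{lemmapartext}, not the statement needed.

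Two further points. First, because $\krlist$ is infinite, even with the Key Lemma in hand one cannot finish by checking finitely many configurations directly; the paper disposes of the infinite families $A_n$, $E_n$, $F_n$, $G_n$, $H_n$ by locating fixed small posets (the $9$-element up- and down-fences, the $8$-crown, the enriched $8$-element fence, and the $10$-element snake) as subposets of all large members, and bounds those via weak partial subgroupoids and computer computation (Lemma~\ref{lemmasnakefence}); your proposal has no mechanism for this reduction. Second, the ``$-1$'' in the sharp bound is not a ``forced loss of one extension'': it is pure bookkeeping, since the number of subsemilattices is the number of subuniverses minus one (the empty set), and sharpness is witnessed by $\slat{F_0}$ with new bottom elements adjoined, each of which exactly doubles the number of subuniverses.
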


Another variant of this result will be stated in Theorem~\ref{thmreform}.

\begin{remark}\label{remarksharp}
 For $n\geq9$, Theorem~\ref{thmmain} is sharp, since there exists an $n$-element non-planar semilattice with exactly $127\cdot 2^{n-8}-1$ subsemilattices. 
\end{remark}

\begin{remark}\label{remmsVnpL} Every semilattice with  \emph{at most seven elements} is planar, regardless the number of its subsemilattices. 
While $\slat{A_0}$ from Lemma~\ref{lemmacomputed} is an  8-element non-planar semilattice with $121$ subsemilattices, 
every \emph{eight-element} semilattice with at least $122=122\cdot 2^{8-8}$ subsemilattices is planar.
\end{remark}

This remark will be proved at the end of Section~\ref{sectfences}.

\begin{remark}\label{remarkmantissa}
Although the numbers $63.5\cdot 2^{n-7}$, 
$31.75\cdot 2^{n-6}$, $15.875\cdot 2^{n-5}$, \dots{} and  $254\cdot 2^{n-9}$, $508\cdot 2^{n-10}$, $1016\cdot 2^{n-11}$, \dots{} are all equal to $127\cdot 2^{n-8}$, we want to avoid fractions as well as large coefficients of powers of 2. This explains the formulation of Theorem~\ref{thmmain}.
\end{remark}

Our result is motivated by similar or analogous results for lattices and for congruences; see 
Ahmed and Horv\'ath~\cite{delbrineszter},
Cz\'edli~\cite{czgnotelatmanyC}, \cite{czglatmancplanar}, \cite{czgslatmanc}, and \cite{czg83},   Cz\'edli and Horv\'ath~\cite{czgkhe}, and Mure\c san and Kulin~\cite{kulinmuresan}.  In particular,  \cite{czg83} proves that if an $n$-element lattice $L$ has at least $83\cdot 2^{n-8}$ sublattices, then $L$ is planar.

\begin{remark} Clearly, an $n$-element semilattice $L$ can have at most $2^{n}-1$ subsemilattices, and it has so many subsemilattices if and only if $L$ is a chain. Chains are planar semilattices. Since, up to isomorphism, there are only finitely many $n$-element semilattices, we can let $\mu(n):=1+\max\{k:$ there is an $n$-element non-planar semilattice with exactly $k$ subsemilattices$\}$. Putting these three facts together, it follows trivially that every $n$-element semilattice with at least $\mu(n)$ subsemilattices is planar, and this result is sharp. So the novelty in this paper is that $\mu(n)$ is explicitly determined and it is given by a simple expression.
\end{remark}

\subsection*{Outline} Apart from half a page devoted to the proof of Remark~\ref{remmsVnpL} at the end of Section~\ref{sectfences}, the rest of the paper is devoted to the proof of Theorem~\ref{thmmain}. In particular, Section~\ref{sectionfirststeps} contains Theorem~\ref{thmreform}, which is a useful reformulation of Theorem~\ref{thmmain}, and Lemma~\ref{lemmakey}; both statements are worth separate mentioning here. In  Section~\ref{sectKR}, 
a deep theorem of Kelly and Rival~\cite{kellyrival} for \emph{lattices} is recalled and a related lemma, proved by our \emph{computer program}, is presented. While reading this section and the rest of the paper, Cz\'edli~\cite{czg83} should be near, since it contains some notation and figures that we need in the present paper.  The rest of the proof is given in Section~\ref{sectfences}.

\section{Another form of our result and some lemmas}\label{sectionfirststeps}
\subsection{Relative number of subuniverses}
A \emph{partial groupoid} is a structure $\slat A$ such that $A$ is a nonempty set and $\vee$ is a map from a subset $\dom {\vee}$ of $A^2$ to $A$. A \emph{subuniverse} of a partial groupoid  $\slat A$
is a subset $X$ of $A$ such that whenever $x,y\in X$ and $(x,y)\in\dom {\vee}$, then $x\vee y\in X$. The set of subuniverses of $\slat A$ will be denoted by $\Sub\slat A$. For a semilattice $\slat L$, $\Sub\slat L$ is usually called the \emph{lattice of subsemilattices} of $\slat L$. In spite of this terminology, note that the collection of subsemilattices is only  $\Sub\slat L\setminus\set{\emptyset}$.  We often abbreviate $\slat L$ and $\Sub\slat L$ as $L$ and $\Sub(L)$, respectively; sometimes, this convention is indicated by $L=\slat L$. Similar convention applies for posets, lattices, and partial groupoids. 
All semilattices, posets, lattices, and partial groupoids in this paper are automatically assumed to be \emph{finite} even if this is not repeated all the time. For more about these structures, the reader can resort to the monograph Gr\"atzer~\cite{ggglt}.

Since a large semilattice $L$ has many (more than $|L|$)  subsemilattices, it is reasonable to relate their number to the number $|L|$ of elements of $L$. Thus, motivated by Cz\'edli~\cite{czg83}, we will adhere to the following terminology and notation.

\begin{definition}
The \emph{relative number of subuniverses} of an $n$-element finite partial groupoid $\slat A$ is defined to be and denoted by
\[
\many\slat A:=|\Sub\slat A|\cdot 2^{8-n}.
\] 
Furthermore, we say that a finite semilattice $L$ has \emph{\smany{} subsemilattices} or, in other words, it has \emph{\smany{}  subuniverses} if $\many(L)>127$.
\end{definition}

Since $|\Sub(L)|$ is larger than the number of subsemilattices by 1, we can reformulate Theorem~\ref{thmmain} and  Remark~\ref{remarksharp} as follows.

\begin{theorem}\label{thmreform}
If $L$ is a finite semilattice such that $\many(L)>127$, then $L$ is planar. In other words, finite semilattices with \smany{} subsemilattices are planar. Furthermore, for every natural number $n\geq 9$, there exists an $n$-element semilattice $L$ such that $\many(L)=127$ and $L$ is not planar.
\end{theorem}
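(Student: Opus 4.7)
The first assertion is a direct restatement of Theorem~\ref{thmmain}. Since the empty set is the unique member of $\Sub(L)$ that is not a subsemilattice, $L$ has exactly $|\Sub(L)|-1$ subsemilattices, so the inequality $\boldsymbol\sigma(L) > 127$, which unpacks to $|\Sub(L)| > 127\cdot 2^{n-8}$, is equivalent (for integer $|\Sub(L)|$ and $n\ge 8$) to having at least $127\cdot 2^{n-8}$ subsemilattices; Theorem~\ref{thmmain} then gives planarity. For $n\le 7$ planarity is automatic by Remark~\ref{remmsVnpL}. So the first sentence needs no independent proof.

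For the sharpness assertion I must produce, for every $n\ge 9$, a non-planar $n$-element semilattice $L_n$ with $|\Sub(L_n)| = 127\cdot 2^{n-8}$. My plan is a seed-and-inflate strategy. The \emph{inflation step} is elementary: if $\slat L$ is a finite semilattice and $t$ is adjoined as a new top element, with $t\vee x = x\vee t = t$ for every $x\in L$, then the subuniverses of $L':=L\cup\{t\}$ split as those avoiding $t$ (which are exactly the members of $\Sub(L)$) and those containing $t$ (which have the form $\{t\}\cup Y$ with $Y\in\Sub(L)$, since every join involving $t$ yields $t$). Hence $|\Sub(L')| = 2|\Sub(L)|$, so $\boldsymbol\sigma(L') = \boldsymbol\sigma(L)$. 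Moreover, any planar Hasse diagram of $L'$ restricts to a planar Hasse diagram of $L$, so non-planarity of $L$ passes to $L'$. Iterating, once a non-planar seed $L_{n_0}$ with $\boldsymbol\sigma(L_{n_0}) = 127$ is in hand, $L_n$ is obtained for every $n > n_0$ by successive top-adjunction.

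The real work is the \emph{seed construction}. Remark~\ref{remmsVnpL} records that $\boldsymbol\sigma(A_0) = 122$, so $n_0 = 8$ is ruled out, and my natural first target is $n_0 = 9$: a non-planar 9-element semilattice with exactly $254$ subuniverses. A plausible route is to attach one carefully placed extra element to $A_0$ (or to a minor modification of it) in such a way that the Kelly--Rival-type obstruction to planarity witnessed inside $A_0$ survives, while the subuniverse count is nudged from $2\cdot 122 = 244$ up to exactly $254$. Verifying both the count and the non-planarity of the candidate is the kind of finite enumeration for which the computer program behind Lemma~\ref{lemmacomputed} is designed, and this verification is what I expect to be the main obstacle. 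Should a 9-element seed turn out not to exist, the inflation step transmits the value $\boldsymbol\sigma = 127$ unchanged to all larger $n$, so it is enough to find \emph{some} non-planar seed (of any size $n_0 \ge 9$) with $\boldsymbol\sigma = 127$, which one can seek by adjoining further controlled structure to $A_0$ until the target value is attained exactly.
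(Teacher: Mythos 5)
Your treatment of the first sentence is circular in the context of this paper: Theorem~\ref{thmmain} is never proved independently here --- its proof is literally ``apply the first two sentences of Theorem~\ref{thmreform}'' --- so you cannot cite it to dispose of that sentence. Your bookkeeping (the empty set accounts for the difference between $|\Sub(L)|$ and the number of subsemilattices, so $\many(L)>127$ is equivalent to having at least $127\cdot2^{n-8}$ subsemilattices for $n\geq 8$) is fine, but it only shows the two theorems are equivalent reformulations; it proves neither. The entire substance of the paper is the proof of exactly this sentence: adjoin a bottom element to obtain the lattice $\plunul L$ with $\many\slat{\plunul L}=\many\slat L$, invoke the Kelly--Rival Theorem~\ref{thmKR} to find some $X\in\krlist$ as a subposet of $\plunul L$, and then rule out every member of $\krlist$ using the Key Lemma~\ref{lemmakey} (subposets that are themselves semilattices have $\many$-value at least $\many(L)$), the computed values of Lemma~\ref{lemmacomputed}, and the fence/crown/snake exclusions of Lemma~\ref{lemmasnakefence}. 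None of this machinery appears in your proposal, so the hard part is missing rather than merely deferred.

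For the sharpness claim, your inflation step is correct and is the order-dual of the paper's: adjoining a new extremal element (top for you, bottom in the paper) doubles $|\Sub|$, preserves $\many$, and preserves non-planarity, so a single seed suffices. The genuine gap is that you never produce the seed: you describe a search (``attach one carefully placed extra element to $A_0$\dots should a 9-element seed turn out not to exist\dots'') whose outcome is left open, so the existence statement for $n\geq 9$ is not actually established. The seed is already on the table in Lemma~\ref{lemmacomputed}: $\many\slat{F_0}=127$, where $F_0$ is the nine-element member of the Kelly--Rival list, hence a non-planar lattice (it is a subposet of itself), hence a non-planar join-semilattice with exactly $254=127\cdot2^{9-8}$ subuniverses. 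With $\slat{F_0}$ as the seed your inflation argument finishes the sharpness part; without a concrete seed it proves nothing.
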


For partial groupoids $\silat{A_1}1$ and $\silat{A_2}2$, we say that 
$\silat{A_1}1$ is a \emph{weak partial subgroupoid} (or a  \emph{weak partial subalgebra}) of  $\silat{A_2}2$ if $A_1\subseteq A_2$, $\dom{\vee_1}\subseteq \dom{\vee_2}$, and $x\vee_1 y=x\vee_2 y$ holds for every $(x,y)\in\dom{\vee_1}$. 
The following easy lemma has been proved in Cz\'edli~\cite{czg83}; it is Lemma 2.3 there.

\begin{lemma}\label{lemmapartext}\
\begin{enumeratei}
\item\label{lemmapartexta}
If $B=\slat B$ is a weak partial subgroupoid of a finite partial groupoid $A=\slat A$, then $\many(B)\geq \many(A)$.  
\item\label{lemmapartextb} In particular, if $S=\slat S$ is a subsemilattice of a finite semilattice $L=\slat L$, then $\many(S)\geq \many(L)$.
\end{enumeratei}
\end{lemma}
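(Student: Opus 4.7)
The plan is to prove (i) first since (ii) follows immediately: a subsemilattice $S$ of $L$ is in particular a weak partial subgroupoid (one has $\dom{\vee_S} = S^2 \subseteq L^2 = \dom{\vee_L}$ and the operations agree on $S^2$), so applying (i) with $A = L$ and $B = S$ yields (ii).

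For (i), the natural approach is to construct a map $\phi \colon \Sub\slat A \to \Sub\slat B$ and to bound the cardinalities of its fibers. Define $\phi(X) := X \cap B$. First I would check that $\phi$ is well-defined, i.e.\ that $X \cap B \in \Sub\slat B$ whenever $X \in \Sub\slat A$. Indeed, if $x,y \in X \cap B$ and $(x,y) \in \dom{\vee_2}$, then since $B$ is a weak partial subgroupoid of $A$, we have $(x,y) \in \dom{\vee_1}$ and $x \vee_2 y = x \vee_1 y$; this element lies in $X$ because $X \in \Sub\slat A$, and it lies in $B$ because $\vee_2$ maps into $B$, so it belongs to $X \cap B$.

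Next I would estimate the fibers. For a fixed $Y \in \Sub\slat B$, every $X \in \phi^{-1}(Y)$ satisfies $X \cap B = Y$, hence $X$ is determined by $Y$ together with the subset $X \setminus B \subseteq A \setminus B$. Consequently $|\phi^{-1}(Y)| \leq 2^{|A|-|B|}$. Summing over $Y \in \Sub\slat B$ gives
\[
|\Sub\slat A| \;=\; \sum_{Y \in \Sub\slat B} |\phi^{-1}(Y)| \;\leq\; |\Sub\slat B| \cdot 2^{|A|-|B|}.
\]
Multiplying both sides by $2^{8-|A|}$ and using the definition of $\many$ yields
\[
\many\slat A \;\leq\; |\Sub\slat B| \cdot 2^{8-|B|} \;=\; \many\slat B,
\]
which is (i).

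There is no real obstacle here; the only point that needs a moment of care is checking that $\phi(X)$ really lies in $\Sub\slat B$, for which it is essential that $\dom{\vee_2} \subseteq \dom{\vee_1}$ and that the two operations agree where both are defined — precisely the content of ``weak partial subgroupoid.'' The fiber-counting step is a standard double counting once the factor $2^{|A|-|B|}$ is isolated, and it dovetails exactly with the normalizing factor $2^{8-n}$ in the definition of $\many$, which is presumably the very reason the definition was set up with this particular exponent.
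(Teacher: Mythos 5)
Your proof is correct: the intersection map $X\mapsto X\cap B$ is well defined precisely because $B$ is a weak partial subgroupoid, each fiber has at most $2^{|A|-|B|}$ elements, and the resulting inequality $|\Sub\slat A|\leq|\Sub\slat B|\cdot 2^{|A|-|B|}$ is exactly what the normalization $\many(\cdot)=|\Sub(\cdot)|\cdot 2^{8-n}$ is designed to absorb. The paper does not reprove this lemma but cites Lemma~2.3 of Cz\'edli's ``Eighty-three sublattices and planarity,'' whose argument is essentially the same fiber count; your only (harmless) deviation is that you swap the roles of the subscripts $1$ and $2$ relative to the paper's definition of weak partial subgroupoid.
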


We also need a deeper statement, which we formulate below.

\begin{lemma}[Key Lemma]\label{lemmakey}
Let $S=\slat S$ and $L=\slat L$ be finite semilattices, and assume that $S$ is a \emph{subposet} of $L$, i.e., $S\subseteq L$ and for any $x,y\in S$, we have $x\leqs y$ if and only if $x\leql y$. Then $\many(S) \geq \many(L)$.
\end{lemma}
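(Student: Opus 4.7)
The plan is to prove the equivalent inequality $|\Sub(L)|\leq 2^{|L\setminus S|}\cdot|\Sub(S)|$, where we set $T:=L\setminus S$, by constructing an injection $\phi\colon\Sub(L)\to\Sub(S)\times 2^T$. If $S$ happens to be a subsemilattice of $L$, the claim already follows from Lemma~\ref{lemmapartext}(\ref{lemmapartextb}); the substance of the lemma concerns the case when $S$ is a proper subposet.

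The definition of $\phi$ rests on a structural observation that follows from the subposet hypothesis: for $x,y\in S$, $x\vee_L y\leql x\vee_S y$, with equality exactly when $x\vee_L y\in S$. In particular, any $V\subseteq S$ closed under $\vee_L$ automatically belongs to $\Sub(S)$. For $U\in\Sub(L)$, write $V:=U\cap S$ and $W:=U\cap T$, and set
\[
V^\ast:=\mathrm{cl}_S(V),\qquad A(V):=\{v_1\vee_L v_2\in T: v_1,v_2\in V,\ v_1\vee_S v_2\notin V\},\qquad W^\ast:=W\setminus A(V),
\]
where $\mathrm{cl}_S$ denotes $\vee_S$-closure in $S$. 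The observation together with $U\in\Sub(L)$ forces $A(V)\subseteq W$, so $\phi(U):=(V^\ast,W^\ast)$ is a well-defined element of $\Sub(S)\times 2^T$, and $\phi(U)=(V,W)$ whenever $V\in\Sub(S)$.

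Injectivity of $\phi$ splits naturally according to the ``type'' of $U$---\emph{Type~A} when $V\in\Sub(S)$, \emph{Type~B} otherwise---and these are distinguished by the image: for Type~A one has $V^\ast\cup W^\ast=U\in\Sub(L)$, whereas for Type~B any pair $v_1,v_2\in V$ with $v_1\vee_S v_2\notin V$ produces $v_1\vee_L v_2\in A(V)\subseteq T$, an element that lies neither in $V^\ast\subseteq S$ nor in $W^\ast$, so $V^\ast\cup W^\ast\notin\Sub(L)$. Hence the type is read off from the image alone. In Type~A, $U=V^\ast\cup W^\ast$ is immediate. In Type~B, $V$ should be recoverable as the unique $V\subsetneq V^\ast$ with $\mathrm{cl}_S(V)=V^\ast$ such that $V\cup W^\ast\cup A(V)\in\Sub(L)$, from which $W=W^\ast\cup A(V)$ is then determined.

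The principal obstacle is the uniqueness of $V$ within Type~B: \emph{a priori}, distinct $V_1,V_2\subsetneq V^\ast$ could share the same $\vee_S$-closure. Supposing $V_1\ne V_2$ with $V_i\cup W_i\in\Sub(L)$ and $W_1\setminus A(V_1)=W_2\setminus A(V_2)$, I would pick (WLOG) $s\in V_2\setminus V_1$ of minimal depth in the closure process, write $s=v_1\vee_S v_2$ with $v_1,v_2\in V_1$, and set $a:=v_1\vee_L v_2\in T$. The subposet inequality $a\leql s$ in $L$, combined with the facts that $a\in A(V_1)$ (because $s\notin V_1$) while the pair $(v_1,v_2)$ fails to contribute $a$ to $A(V_2)$ (because $s\in V_2$), feeds into a case analysis---in the spirit of the proofs in \cite{czg83}---that contradicts the equality $W_1\setminus A(V_1)=W_2\setminus A(V_2)$, forcing $V_1=V_2$ and completing the proof.
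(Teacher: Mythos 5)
Your plan founders on the injectivity of $\phi$, and unfortunately the problem is not merely an unproven step: the map as you define it is \emph{not} injective, so the ``principal obstacle'' you name is a genuine counterexample, not a provable claim. Let $S$ be the join-semilattice consisting of three pairwise incomparable elements $y_1,y_2,y_3$, their pairwise $S$-joins $s'=y_1\vee_S y_2$, $p=y_1\vee_S y_3$, $q=y_2\vee_S y_3$ (pairwise incomparable), and a top element $s$. Let $L$ be obtained from $S$ by inserting four new elements $z_{12},z_{13},z_{23},z$ so that $y_1\vee_L y_2=z_{12}\prec s'$, $y_1\vee_L y_3=z_{13}\prec p$, $y_2\vee_L y_3=z_{23}\prec q$, and $z$ covers $z_{12},z_{13},z_{23}$ and is covered by $s$ (thus $z=y_1\vee_L y_2\vee_L y_3$). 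One checks directly that $S$ and $L$ are join-semilattices and that $S$ is a subposet of $L$, with $T=\set{z_{12},z_{13},z_{23},z}$. Now take $U_1=\set{y_1,y_2,y_3,z_{12},z_{13},z_{23},z}$ and $U_2=U_1\cup\set s$; both belong to $\Sub(L)$, and both are of your Type B, since $V_1=\set{y_1,y_2,y_3}$ and $V_2=\set{y_1,y_2,y_3,s}$ each fail to contain $s'=y_1\vee_S y_2$. For both, $\mathrm{cl}_S(V_i)=S$ and $A(V_i)=\set{z_{12},z_{13},z_{23}}$: indeed $s',p,q$ lie outside both $V_1$ and $V_2$, while $z$ never enters $A(V_i)$ because $z$ is not the $\vee_L$-join of \emph{any pair} of elements of $S$ (the only elements of $S$ below $z$ are $y_1,y_2,y_3$, whose pairwise $L$-joins are $z_{12},z_{13},z_{23}$). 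Hence $\phi(U_1)=\phi(U_2)=(S,\set z)$ although $U_1\neq U_2$; moreover, both $V_1$ and $V_2$ pass your recovery test ``$V\subsetneq V^\ast$, $\mathrm{cl}_S(V)=V^\ast$, $V\cup W^\ast\cup A(V)\in\Sub(L)$''.

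The counterexample pinpoints why your uniqueness sketch cannot be completed: it begins by writing a minimal-depth element $s\in V_2\setminus V_1$ as $v_1\vee_S v_2$ with \emph{two} joinands from $V_1$, but a minimal-depth element may require three or more joinands, and then the corresponding $L$-join (here $z$) is invisible to the pair-based set $A(V)$ and falls into $W^\ast$ for both preimages. (Your preliminary observations are fine --- $x\vee_L y\leq_L x\vee_S y$ with equality iff $x\vee_L y\in S$, and the Type A/Type B distinction is indeed readable from the image --- and the target inequality $|\Sub(L)|\leq 2^{|T|}\cdot|\Sub(S)|$ is true, being equivalent to the lemma; it is only the map that fails.) Any repair must track iterated joins rather than pairs, and that is precisely the bookkeeping the paper's proof is organized to avoid: it takes a minimal counterexample with respect to $|L\setminus S|$, inserts a \emph{single} carefully chosen element $d=a\vee_L b$ (with $j=a\vee_S b$ chosen minimal among the elements witnessing that $S$ is not a subsemilattice), proves that $B=S\cup\set d$ is again a semilattice, and shows that $\Sub\slat B\to\Sub\slat S$, $X\mapsto[X\setminus\set d]_S$, has fibers of size at most two, whence $\many(S)\geq\many(B)\geq\many(L)$ by the minimality assumption. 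Adding one element at a time is what keeps the fibers controllable; the all-at-once injection into $\Sub(S)\times 2^T$ loses exactly that control.
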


Since subsemilattices are subposets, Lemma~\ref{lemmakey} implies part \eqref{lemmapartextb} of Lemma~\ref{lemmapartext}.
For a poset $P$, we will use the standard notation
\begin{equation*}
\text{$\Mi(P):=\{x\in P: x$ has exactly one upper cover$\}$.} 
\end{equation*}
The covering relation in $P$ will be denoted by $\prec_P$; so $x\prec_P y$ will mean that $|\set{z\in P: x\leq z\leq y}|=2$. For $X\subseteq P$, the \emph{set of upper bounds} of $X$ is denoted by $\ubin P(X):=\{y\in P: y\geq x$ for all $x\in X\}$. For $X=\set{x_1,\dots,x_n}$, we will write 
$\ubin P(x_1,\dots,x_n)$ rather than $\ubin P(\set{x_1,\dots,x_n})$.

\begin{proof}[Proof of Lemma~\ref{lemmakey}] For the sake of contradiction, suppose that the lemma fails. Then we can pick  semilattices $S$ and $L$ with minimal value of $|L\setminus S|$  such that $S$ is a subposet of $L$ and $\many(S)<\many(L)$. We know from Lemma~\ref{lemmapartext}\eqref{lemmapartextb} that $S$ is \emph{not} a subsemilattice of $L$. Hence, we can pick a \emph{minimal} element $j\in S$ such that $a\vee_S b\neq a\vee_L b$ for some $a,b\in S$. Let $d:=a\vee_L b\in L$. Clearly, $a$ is \emph{incomparable} with $b$ (in notation, $a\parallel b$), $d<j$, and $d\notin S$. Define $B:=S\cup\set d$. With the ordering inherited from $L$, $B=(B;\leq)$ is a subposet of $L$. If $d<_B x$ for some $x\in B$, then $x\in S\cap \ubin S(a,b)$ gives that $j=a\vee_S b\leq x$, whereby $d\prec_B j$ and, in addition, $j$ is the only upper cover of $d$ in $B$. This proves the first half of the following observation; the second half is an easy consequence of the first one.
\begin{equation}
\text{$d\prec_B j$,\,$d\in \Mi(B)$, and so, for $s\in S$, $s\parallel j\then s\parallel d$.}
\label{eqondnB}
\end{equation}  
We are going to show that $(B;\leq)$ is a (join-)semilattice. With the notation $\ofideal B d:=\set{x\in B: x\leq d}$,  we let 
$D:=S\cap\ofideal B d$. If $x,y\in B$ are comparable elements, then $x\vee_B y$ trivially exists and equals $x\vee_L y\in\set{x,y}$. We claim that whenever 
$x,y\in B$ and $x\parallel y$, then $x\vee_B y$ still exists and
\begin{align}
x\vee_B y&=x\vee_S j, &&\text{ if $y=d$ and $x\parallel d$},\label{alignjoina}\\
x\vee_B y&=j\vee_S y, &&\text{ if $x=d$ and $d\parallel y$},\label{alignjoinb}\\
x\vee_B y&=x\vee_S y, &&\text{ if $\set{x,y}\subseteq S$ and $x\vee_S y\neq j$},\label{alignjoinc}\\
x\vee_B y&=x\vee_S y=j, &&\text{ if $\set{x,y}\subseteq S$,  $\set{x,y}\not\subseteq D$ and $x\vee_S y = j$},\label{alignjoind}\\
x\vee_B y&=d, &&\text{ if $\set{x,y}\subseteq S$,  $\set{x,y}\subseteq D$ and $x\vee_S y = j$}.\label{alignjoine}
\end{align}
Note that the inclusion $\set{x,y}\subseteq S$ makes the assumption in \eqref{alignjoine} redundant; this inclusion occurs there only for emphasis. It suffices to show that for each of \eqref{alignjoina}--\eqref{alignjoine}, the element given right after ``$x\vee_B y=$'' is the smallest element of $\ubin B(x,y)$.

In order to verify \eqref{alignjoina}, assume that  $x\parallel d$. Then $d\notin \ubin B(x,d)$, and it follows from \eqref{eqondnB} that $\ubin B(x,d)=\ubin B(x,j)=\ubin S(x,j)$, whereby we conclude  \eqref{alignjoina}. Since the role of $x$ and $y$ is symmetric, we also conclude \eqref{alignjoinb}.

Next, assume that $\set{x,y}\subseteq S$ and $x\vee_S y\neq j$. If $x\vee_S y>j$ or $x\vee_S y\parallel j$, then $j\notin \ubin S(x,y)$ gives that $d\notin \ubin B(x,y)$, whereby $\ubin B(x,y)=\ubin S(x,y)$ and we obtain that $x\vee_B y$ exists and equals $x\vee_S y$. If $x\vee_S y<j$, then there are two cases. First, if $x\vee_S y<d$, then $x\vee_B y=x\vee_S y$ is clear. Second, assume that  $x\vee_S y\not<d$ while $x\vee_S y<j$. Since $x\vee_S y\in S$ but $d\notin S$, 
$x\vee_S y\neq d$. So $x\vee_S y\not\leq d$.
 The minimality of $j$ gives that $x\vee_S y= x\vee_L y$. Hence, $x\vee_L y= x\vee_S y\not\leq d$ yields that at least one of $x\leq d$ and $y\leq d$ fails,  whence $d\notin \ubin B(x,y)$. Consequently, $\ubin B(x,y)=\ubin S(x,y)$ and $x\vee_B y=x\vee_S y$ is clear again. This proves \eqref{alignjoinc}.

Since the assumptions in  \eqref{alignjoind} imply that $d\notin \ubin B(x,y)$, we have that $\ubin B(x,y)=\ubin S(x,y)$, whereby \eqref{alignjoind} follows.

Finally, $\set{x,y}\subseteq D$ and $x\vee_S y = j$, then $\ubin B(x,y)=\set d\cup\ubin S(x,y)= \set d\cup\offilter S j$ implies that $x\vee_B y=d$, proving  \eqref{alignjoine}. Now, as it was mentioned earlier, 
 \eqref{alignjoina}-- \eqref{alignjoine} imply that $B=(B;\leq)$ is a semilattice $\slat B$. Since $B$ is also a subposet of $L$ and $|L\setminus B|=|L\setminus S|-1$, the minimality of $|L\setminus S|$ yields that 
\begin{equation}
\many\slat B \geq \many\slat L.\label{eqBmrtkLts}
\end{equation}

Next, for a subset $Y$ of $S$, we denote by $[Y]_S$ the subsemilattice of $\slat S$ generated by $Y$. The notation $[Y]_B$ is understood analogously.
Consider the  map
\[\phi\colon\Sub\slat B\to\Sub\slat S,\text{ defined by } X\mapsto [X\setminus\set d]_S;
\]
our plan is to show that 
\begin{equation}
\text{each $Y\in\Sub\slat S$  has exactly one or two preimages with respect to $\phi$.}\label{eqtxtegyket}
\end{equation}
In order to do so, assume that $Y\in\Sub\slat S$. 
There are two cases to consider; assume first that  $j\notin Y$ and let $Z:=[Y]_B$. Let $Z_0:=Y$ and, for $i>0$, let 
\begin{equation}
\text{
$Z_{i+1}:=Z_i\cup\{x\vee_B y: x,y\in Z_i,\,\,x\parallel y\}$.  Then $Z=\bigcup_{i\in \nnul} Z_i$,}
\label{eqnXtzZz}
\end{equation} 
where $\nnul$ denotes the set of nonnegative integers. 
Since $Z_0\cap\set{j,d}=\emptyset$ and $j\notin Y$, 
only \eqref{alignjoinc} from the rules \eqref{alignjoina}--\eqref{alignjoine} can be applied when we compute $Z_1$ according to \eqref{eqnXtzZz}. However,  \eqref{alignjoinc}  does not produce any new element since $Z_0=Y\in\Sub\slat S$. So, $Z_1=Z_0$, and \eqref{eqnXtzZz} leads to $Y=Z_0=Z_1=Z_2=\dots = Z\in\Sub\slat B$. Since $\phi(Z)=\phi(Y)=[Y\setminus\set d]_S= [Y]_S=Y$, $Y$ has at least one preimage, $Z=Y$.

Now, let $X$ be an arbitrary preimage of $Y$. Since $Y=\phi(X)=[X\setminus \set d]_S$, we have that $X\setminus \set d\subseteq Y$, that is, $X\subseteq Y\cup\set{d}$. Thus, to show that $Y$ has at most two preimages, it suffices to show that $Y\subseteq X$, because then $X$ will necessarily belong to $\set{Y, Y\cup\set d}$. Suppose, for a contradiction, that $Y\not\subseteq X$, and pick an element $u\in Y\setminus X$. 
Then, using $X\subseteq Y\cup\set{d}$, we have that  
$u\in Y=\phi(X)=[X\setminus\set d]_S\subseteq [Y\setminus\set u]_S$. Hence, there is a smallest $k$ such that 
$u=y_1\vee_S\dots \vee_S y_k$ with some $y_1,\dots, y_k\in X\setminus \set d\subseteq Y\setminus\set u$. 
Since $Y\in\Sub\slat S$, 
\begin{equation}\left.
\parbox {8cm}{all the joins 
$y_1\vee_S y_2$, $y_1\vee_S y_2\vee_S y_3 = (y_1\vee_S y_2)\vee_S y_3$,  
$y_1\vee_S y_2\vee_S y_3\vee_S y_4 = (y_1\vee_S y_2\vee_S y_3)\vee_S y_4$, \dots, $y_1\vee_S\dots \vee_S y_k=u$ belong to $Y$, 
}\right\}\label{pbxallTjNsS}
\end{equation}
and none of them is $j$ since $j\notin Y$. By the minimality of $k$, all the outer joins above apply to incomparable joinands.
Thus, only \eqref{alignjoinc} of the five computational rules applies to these outer joins, whereby
\begin{equation}\left.
\parbox {8cm}{
the joins in \eqref{pbxallTjNsS} equal the joins 
$y_1\vee_B y_2$, $y_1\vee_B y_2\vee_B y_3 = (y_1\vee_B y_2)\vee_B y_3$,  
$y_1\vee_B y_2\vee_B y_3\vee_B y_4 = (y_1\vee_B y_2\vee_B y_3)\vee_B y_4$, \dots, $y_1\vee_B\dots \vee_B y_k=u$.}\right\} \label{pbxallTjNinT}
\end{equation}
Since all the $y_1,\dots, y_k$ belong to $X$, so does $u$, which is a contradiction. Thus,
 \eqref{eqtxtegyket} holds for the particular case $j\notin Y$.

Second, we assume that $j\in Y$. Let $Z:=Y\cup\set d$. Rules  \eqref{alignjoina}--\eqref{alignjoine}
yield that for incomparable $x,y\in Z$,
the join $x\vee_B y$ belongs to $\set{d, x\vee_S j, j\vee_S y, x\vee_S y} \subseteq \set d\cup Y=Z$, since $Y$ is $\vee_S$-closed, $j\in Y$, and $d\in Z$. Hence, $Z\in\Sub\slat B$. Since $\phi(Z)=[Z\setminus\set d]_S=[Y]_S=Y$, we have obtained that $Y$ has at least one preimage, $Z$. Now, let $X\in\Sub\slat B$ be an arbitrary preimage of $Y$. We claim that 
\begin{equation}
Y\setminus\set j \subseteq X\subseteq Y\cup \set d.
\label{eqngykzrfGtm}
\end{equation}
The second inclusion is clear by the definition of $\phi$. Suppose, for a contradiction, that the first inclusion fails, and pick an element $u\in Y\setminus\set j$ such that $u\notin X$. Note that the earlier meaning of $u$, given before \eqref{pbxallTjNsS}, is no longer valid. Observe that 
$u\in Y=\phi(X)=[X\setminus\set d]_S$ and $X\subseteq Y\cup \set d$ imply that there is a smallest $k$ such that $u$ is of the form $u=y_1\vee_S\dots \vee_S y_k$ with some $y_1,\dots, y_k\in X\cap(Y\setminus\set{u})$. 
The equalities listed in  
\eqref{pbxallTjNsS} and \eqref{pbxallTjNinT} are understood for the present situation. If none of the joins in \eqref{pbxallTjNsS} equals $j$, then 
\eqref{pbxallTjNinT} is still valid and leads to $u\in X$, which is a contradiction. 
If one of the joins in \eqref{pbxallTjNsS} is $j$, then this join is not the last one (which gives $u$), and 
this join can change to $d$ in \eqref{pbxallTjNinT} 
by rule \eqref{alignjoine}. 
More precisely, it may happen that 
$(y_1\vee_S\dots \vee_S y_{i-1})\vee_S y_i$ equals $j$ in \eqref{pbxallTjNsS} for some $i<k$ but 
$(y_1\vee_B\dots \vee_
B y_{i-1})\vee_B y_i=d$ in \eqref{pbxallTjNinT}; note that $i$ is uniquely determined since $k$ was the least possible number for \eqref{pbxallTjNsS} and so the joins in \eqref{pbxallTjNsS} form a strictly increasing sequence.  
By the minimality of $k$, we have that $j\parallel y_{i+1}$. Hence, $d\parallel y_{i+1}$ by \eqref{eqondnB}. It follows from \eqref{alignjoinb} that 
\[(y_1\vee_B\dots\vee_B y_i)\vee_B y_{i+1}=d\vee_B y_{i+1}=j\vee_S y_{i+1}=(y_1\vee_S\dots\vee_S y_i)\vee_S y_{i+1}.
\]
We have seen that there can be at most one $i<k$ such that the $i$-th join in \eqref{pbxallTjNsS} and that in \eqref{pbxallTjNinT} are different, but we still have that $u=y_1\vee_B\dots \vee_B y_n\in X$, which is a contradiction. This proves \eqref{eqngykzrfGtm}.

If $Y\setminus \set j\in \Sub\slat S$, then $j$ must belong to $X$, because otherwise \eqref{eqngykzrfGtm} would lead to $j\in Y=\phi(X)=[X\setminus\set d]_S\subseteq [Y\setminus\set j]_S=Y\setminus\set j$, a contradiction. Hence, if $Y\setminus \set j\in \Sub\slat S$, then $j\in X$ combined with  \eqref{eqngykzrfGtm} allow at most two possible sets $X$. 
Hence, we can assume that in addition to $j\in Y$, we have that $Y\setminus\set j\notin\Sub\slat S$.  Then, since $Y$ is $\vee_S$-closed but $Y\setminus\set j$ is not, there are $x,y\in Y\setminus \set j$ such that $j=x\vee_S y$. Clearly, $x\parallel y$.  We know from \eqref{eqngykzrfGtm} that $x$ and $y$ belong to $X$, whence $x\vee_B y\in X$. 
It follows from  \eqref{alignjoind} and \eqref{alignjoine} that $x\vee_B y\in\set{j,d}$.
If  $x\vee_B y=j$, then $j\in X$ and $X\in \set{Y, Y\cup \set d}$ by \eqref{eqngykzrfGtm}, so there are at most two preimages $X$ of $Y$. Similarly, if $x\vee_B y=d$, then $d\in X$ and $X\in \set{(Y\cup\set d)\setminus\set j, Y\cup \set d}$ by \eqref{eqngykzrfGtm} and, again there are at most two preimages $X$ of $Y$. Hence, we have shown that if $j\in Y$, then $Y$ has one or two preimages.

Now, after that all cases have been considered, \eqref{eqtxtegyket} has been proved. As a particular case of \eqref{eqtxtegyket}, we know that $\phi$ is a surjective map. It is a trivial consequence of \eqref{eqtxtegyket} that 
$2\cdot |\Sub\slat S| \geq |\Sub\slat B|$. Dividing this inequality by $2\cdot 2^{|S|-8} = 2^{|B|-8}$, we obtain that $\many \slat S\geq \many\slat B$.
This inequality and \eqref{eqBmrtkLts} yield that $\many\slat S\geq \many \slat L$, contradicting our initial assumption and completing the proof of Lemma~\ref{lemmakey}.
\end{proof}

\section{A deep result of D.\ Kelly and I.\ Rival and a computer program}\label{sectKR}
For a poset $P$, its dual will be denoted by $\dual P$.
With reference to Kelly and Rival~\cite{kellyrival} or, more conveniently, to Figures~1--5 of Cz\'edli~\cite{czg83}, where Kelly and Rival's theorem is recalled, the \emph{Kelly--Rival list} of \emph{lattices} is defined as follows.
\begin{equation}
\krlist:=\set{A_n, E_n, \dual{E_n}, F_n, G_n, H_n: n\geq 0} \cup \set{B,\dual B, C, \dual C, D,\dual D}.
\label{eqKRlist}
\end{equation}
Note that $A_n$, $F_n$, $G_n$, and $H_n$ are selfdual lattices. Our main tool is the following theorem. 

\begin{theorem}[Kelly--Rival Theorem, taken from Kelly and Rival~\cite{kellyrival}]\label{thmKR}
A finite lattice $L$ is planar if and only if for every $X\in\krlist$,  $X$ is not a \emph{subposet} of $L$. 
\end{theorem}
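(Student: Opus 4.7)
The plan is to prove the two directions of the biconditional separately, since they are of very different character. The easy direction (necessity of the forbidden-subposet condition) reduces to observing that each member of $\krlist$ is ``visibly'' non-planar as an undirected graph, and then lifting this non-planarity from a subposet to the ambient lattice. The converse (sufficiency) is where all the depth lies, and I would tackle it by a structural induction on $|L|$, aimed at classifying minimal non-planar lattices.

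For the necessity direction, my first step would be a case-by-case inspection of the finitely many named lattices $B,\dual B,C,\dual C,D,\dual D$ together with the five parametric families $A_n, E_n, F_n, G_n, H_n$ (using Figures~1--5 of \cite{czg83}): I would verify that each of these Hasse diagrams, viewed as an undirected graph, contains a Kuratowski subdivision (of $K_5$ or $K_{3,3}$). The second step would be to show that if $X\in\krlist$ is a subposet of $L$, then the Hasse diagram of $L$ contains, as a topological minor, the Hasse diagram of $X$: for every covering pair $x\prec_X y$, choose a saturated chain from $x$ to $y$ in $L$ (which exists because $x<_L y$); the resulting sequence of Hasse edges in $L$ subdivides the abstract edge $xy$ of $X$. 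Choosing these chains greedily so that the internal vertices of each new chain avoid elements of $X$ and all previously chosen chains yields a subdivision of a non-planar graph inside the Hasse diagram of $L$, proving that $L$ is non-planar.

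For the sufficiency direction, I would argue by induction on $|L|$, assuming a minimal counterexample: a non-planar finite lattice $L$ containing no $X\in\krlist$ as a subposet, and in which every proper sublattice (indeed every proper subposet that is still a lattice) is planar. The plan would then be to perform local surgery on $L$, using doubly irreducible elements (which can be removed while preserving lattice structure) and the structure of the outer boundary of a hypothetical planar presentation of ``most of'' $L$. By iterating such reductions I would hope to force the minimal non-planar $L$ itself into the list \eqref{eqKRlist}.

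The main obstacle, and really the whole content of the Kelly--Rival theorem, is the completeness of $\krlist$ in the sufficiency direction. One must show that the structural reductions above are exhaustive, so that any minimal bad $L$ must coincide (as a subposet) with some already-listed lattice; in particular one must recognise the infinite families $A_n, E_n, F_n, G_n, H_n$ as the only possible ``growing'' obstructions. This is precisely the substantial case analysis of \cite{kellyrival}, and my proposal would ultimately be to invoke their theorem as a black box rather than reprove it, since reproving it here would dwarf the rest of the present argument.
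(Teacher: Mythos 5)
The paper offers no proof of this statement: it is imported verbatim from Kelly and Rival~\cite{kellyrival} and used as a black box, which is exactly where your proposal ends up for the sufficiency direction. At that level you and the paper agree, and deferring to \cite{kellyrival} for the hard half is the right call.

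However, the necessity argument you do sketch has a genuine gap. Planarity of a lattice is \emph{order} planarity (a Hasse diagram drawn with upward, non-crossing edges), not graph planarity of the covering graph, and the two notions differ. By Platt's criterion a finite lattice is planar if and only if the graph obtained from its Hasse diagram by adding an edge between $0$ and $1$ is planar; the extra edge matters. Concretely, your first step already fails for $A_0\in\krlist$: its six middle elements form the hexagon $a$--$B$--$c$--$A$--$b$--$C$--$a$, the bottom $o$ attaches to the alternating set $\set{a,b,c}$ and can be placed inside the hexagon, and the top $i$ attaches to $\set{A,B,C}$ and can be placed outside, so the Hasse diagram of $A_0$ is a planar \emph{graph} and contains no Kuratowski subdivision; only the added $o$--$i$ edge destroys planarity. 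Your second step is also unjustified: the saturated chains realizing different covering pairs of $X$ inside $L$ cannot in general be chosen internally disjoint and disjoint from $X$, so you do not automatically obtain a topological minor; and even if you did, non-planarity of the covering graph is the wrong target. The actual necessity proof in \cite{kellyrival} is order-theoretic: one shows each member of $\krlist$ is a non-planar \emph{lattice} and separately that any lattice occurring as a subposet of a planar lattice is itself planar. Since the present paper never attempts a proof, the safe course --- and the one consistent with the paper --- is to cite the theorem outright and drop the sketched necessity argument, or replace it with the Platt-type criterion and the subposet-inheritance lemma.
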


Note that the original version of this theorem in \cite{kellyrival} says more by stating that $\krlist$ is the unique minimal list that makes the theorem work but we will not use this fact. 

When working on paper \cite{czg83}, 
 the author has developed a straightforward \emph{computer program} under Windows 10.  This program, called \emph{subsize}, is downloadable from the author's website. 
Using the straightforward trivial algorithm, 
the program computes $|\Sub(A; F)|$ for an arbitrary partial algebra $(A;F)$, provided $|A|$ is small and it has only (at most) binary operations. The following lemma as well as  some other statements in the rest of the paper are proved by this program; an appropriate input file (a single file for all these statements) is  available from the author's website; see the list of publications there. While the program is reliable up to the author's best knowledge and one can easily write another computer program, the reader may want to (but need not) check the input file for correctness. (Note that the notation used in the input file is taken from the figures in Cz\'edli~\cite{czg83}.) The output file, from which the input file can easily be recovered, is an appendix of the arXiv version\footnote{\emph{This} is the arXiv version.} of the present paper.

The join-semilattices occurring in the lemma below are the semilattice reducts of the ``small''  lattices occurring in the Kelly--Rival list $\krlist$. Note that, for any lattice $X$, $\slat {\dual X}$ is the same as  $(X;\wedge)$; this trivial fact made it easier to produce most parts of the input file from one of the input files that go with \cite{czg83}.

\begin{lemma} \label{lemmacomputed}
 $\many(A_0)=\many\slat{A_0}=122$, 
 $\many\slat{B}=108$, 
 $\many\slat{\dual B}=114$, 
 $\many\slat{C}=123$, 
 $\many\slat{\dual C}=113$,
 $\many\slat{D}=116$, 
 $\many\slat{\dual D}=124$,
 $\many\slat{E_0}=114$,
 $\many\slat{\dual E_0}=110$,
 $\many\slat{E_1}=79.75$,
 $\many\slat{\dual E_1}=84.5$,
 $\many\slat{F_0}=127$, 
 $\many\slat{F_1}=88.75$,
 $\many\slat{G_0}=98.75$, and 
 $\many\slat{H_0}=99.5$.
\end{lemma}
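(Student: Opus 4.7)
The lemma is a list of fifteen arithmetic equalities concerning fixed, small, explicitly given semilattices, so my plan is straightforward direct enumeration for each one. For each lattice $X$ appearing in the statement, I would (i) read off the Hasse diagram of $X$ from the figures in Cz\'edli~\cite{czg83} that go with the Kelly--Rival list, (ii) compute the join table $\vee\colon X^2\to X$ of the semilattice reduct $\slat X$ by locating, for each pair $(x,y)$, the least element of $\ubin X(x,y)$, (iii) enumerate the $2^{|X|}$ subsets $Y\subseteq X$ and retain exactly those for which $x,y\in Y$ implies $x\vee y\in Y$, and (iv) multiply the resulting count $|\Sub\slat X|$ by $2^{8-|X|}$ to obtain $\many\slat X$. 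Comparing with the claimed number finishes the verification for that $X$.

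The only real work sits in step (iii), and it is entirely mechanical. For each candidate $Y$ one walks through its $\binom{|Y|}{2}$ unordered pairs and checks whether each join lies in $Y$. Since the lattices in $\krlist$ appearing in the lemma have at most about a dozen elements, the search space $2^{|X|}$ is small enough that a short program—the author's \emph{subsize}—handles the full list in negligible time; the smallest cases such as $B$, $\dual B$, $C$, $\dual C$, $D$, $\dual D$ and $A_0$ (eight or nine elements each) can also be done by hand or organized recursively, for example by partitioning $\Sub\slat X$ according to whether a chosen $\vee$-irreducible belongs to the subuniverse, which roughly halves the problem at each step.

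The first entry $\many(A_0)=\many\slat{A_0}=122$ carries an extra claim: the sublattices of $A_0$ and the subsemilattices of $\slat{A_0}$ coincide, equivalently, every $\vee$-closed subset of $A_0$ is already $\wedge$-closed. I would verify this by running the enumeration once with the full lattice signature and once with the join-only signature and confirming that both counts equal $122$; a posteriori, inspection of $A_0$ should make the coincidence transparent.

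The main obstacle is hygienic rather than conceptual: avoiding any transcription or arithmetic error across fifteen independent enumerations. The natural safeguards are to re-implement the counting program independently, to hand-check the smallest items (say $\slat B$ and $\slat C$) as calibration, and to double-check each extracted join table against the corresponding Kelly--Rival figure before feeding it to the machine. Once those checks are in place, the lemma reduces to reporting the certified output, exactly as the author does by pointing to the input file for \emph{subsize} in the arXiv appendix.
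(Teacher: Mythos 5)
Your proposal is essentially the paper's own proof: the lemma is established by brute-force enumeration of the $\vee$-closed subsets of each listed semilattice reduct, carried out by the author's program \emph{subsize} on join tables transcribed from the Kelly--Rival figures, with the counts recorded in the appendix; your steps (i)--(iv) are exactly that computation, and your hygiene safeguards match the paper's remarks about checking the input file.

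One correction, though: you misread the first entry. By the paper's stated convention of abbreviating $\slat{A_0}$ as $A_0$, the equality $\many(A_0)=\many\slat{A_0}$ is purely notational; it does \emph{not} assert that the sublattices of the lattice $A_0$ coincide with the subsemilattices of its join reduct. Indeed that assertion is false: in the $8$-element Boolean lattice $A_0$ the set consisting of the top element and two coatoms is $\vee$-closed but not $\wedge$-closed, so the full-signature subuniverse count is strictly smaller than $122$, and the double enumeration you propose would not return matching numbers. Dropping that extra verification, the rest of your argument goes through as in the paper.
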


\section{Fences, a snake, and the end of the proof}\label{sectfences}
We need the following four posets. The \emph{enriched 8-element fence} and the \emph{10-element snake} are given in Figure~\ref{figa}. (In spite of its name, the enriched 8-element fence consists of ten elements.) If we remove  all the black-filled elements, labeled by $i$, from Figure~\ref{figa}, then we obtain the \emph{9-element up-fence}, its dual, the \emph{9-element down-fence}, and the \emph{8-crown}.

\begin{lemma}\label{lemmasnakefence}
If $L$ is a finite join-semilattice such that $\many(L)>127$ (in other words, if $L$ has \smany{} subsemilattices), then none of the enriched 8-element fence, the 8-crown, the 9-element up-fence, the 9-element down-fence, and the 10-element snake is a \emph{subposet} of $L$.
\end{lemma}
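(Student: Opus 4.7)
The plan is to prove the lemma by contradiction: assume that one of the five posets, call it $P$, is a subposet of $L$. I will exhibit a join-semilattice $S$ that is a subposet of $L$ with $\many(S)\leq 127$; then Lemma~\ref{lemmakey} forces $\many(L)\leq\many(S)\leq 127$, contradicting $\many(L)>127$. A key conceptual point is that Lemma~\ref{lemmakey} only demands that $S$ be a \emph{subposet} of $L$, so the join operation on $S$ need not agree with that of $L$; this gives room to repair a non-semilattice $P$ by adjoining joins computed inside $L$.

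First I would dispose of the two ten-element configurations. By inspection of Figure~\ref{figa}, both the enriched $8$-element fence and the $10$-element snake are themselves join-semilattices on their ten depicted elements. In these cases I simply take $S:=P$ and invoke the same computer program used to establish Lemma~\ref{lemmacomputed} to verify $\many(S)\leq 127$ directly; Lemma~\ref{lemmakey} then concludes.

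The remaining three cases---the $9$-element up-fence, the $9$-element down-fence, and the $8$-crown---are not semilattices in their own right. For such a $P$, I would take $S$ to be the subsemilattice of $L$ generated by the elements of $P$, which is automatically a subposet of $L$. Since each of these three posets is obtained from the enriched $8$-fence by removing one or two ``enriching'' black-filled elements, the new elements of $S$ (namely the joins in $L$ of the relevant incomparable pairs in $P$ that lack a common upper bound within $P$) must supply substitutes for those missing elements, possibly subject to coincidences. A short structural case analysis on which of the newly generated joins coincide---with each other or with existing elements of $P$---shows that $S$ belongs to a finite list of candidate semilattices, principally the enriched $8$-fence and the $10$-snake themselves, plus a few smaller collapsed variants. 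For each isomorphism type that occurs, the computer program confirms $\many(S)\leq 127$, and Lemma~\ref{lemmakey} once again closes the case.

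The main obstacle is the enumeration in the third step: one must systematically list every pattern of identifications among the newly generated joins and check that no configuration produces a semilattice $S$ with $\many(S)>127$. The small size of the posets involved (at most ten elements) keeps this enumeration finite and makes the computer verification mechanical, but care is required to ensure that no collapsing pattern is overlooked and that every arising $S$ is a legitimate subposet of $L$.
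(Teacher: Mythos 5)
Your treatment of the enriched $8$-element fence and the $10$-element snake is exactly the paper's: both are join-semilattices, the program gives $\many$-values $78$ and $125.5$, and Lemma~\ref{lemmakey} finishes. The gap is in the other three cases. You take $S$ to be the subsemilattice of $L$ generated by the elements of $P$ and assert that ``a short structural case analysis'' confines $S$ to a few candidates, ``principally the enriched $8$-fence and the $10$-snake themselves, plus a few smaller collapsed variants.'' That claim is unjustified and essentially false: the generated subsemilattice must contain the joins of \emph{all} subsets of $P$, not just of the incomparable covered pairs visible in the figure. For the $8$-crown, for instance, $a\vee c$, $b\vee d$, $e\vee g$, $a\vee b\vee c\vee d$, \dots{} may all be new and pairwise distinct, so $S$ can have far more than ten elements and is in no way forced to be one of the two named posets; the enumeration of its possible isomorphism types is not short, and you neither carry it out nor give a reason why every outcome satisfies $\many(S)\leq 127$.

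The paper avoids this explosion with a different mechanism, which your proposal does not use: Lemma~\ref{lemmapartext}(i) on \emph{weak partial subgroupoids}. One first shrinks the upper elements of the fence or crown (replacing $f$ by $a\vee_L b$, etc., which preserves all comparabilities and incomparabilities), so that a handful of join equations such as $a\vee b=f$, \dots, $g\vee h=i$ genuinely hold in $L$; then one adjoins only the one extra element $i$ and records only those few equations, obtaining a small \emph{partial} groupoid whose $\many$-value the program computes ($123.5$ for the down-fence, $125$ for the crown). Since a weak partial subgroupoid has $\many$-value at least $\many(L)$, the contradiction follows without ever determining the full generated subsemilattice. Note also that even this route is delicate for the $9$-element up-fence: the natural ten-element partial structure has $\many$-value $137>127$, and the paper must split into the cases $h\vee j=i$ and $h\vee j\neq i$ to get the bounds $122$ and $114.25$. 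Your proposal gives no indication of how it would handle this case, which is precisely where the naive bound fails. So the approach for the three non-semilattice posets needs to be replaced or substantially completed.
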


\begin{proof} Let $\slat L$ be a finite join-semilattice such that $\many(L)>127$. Unless otherwise stated, the join $\vee$ is understood in $\slat L$. The notation for the elements given in Figure~\ref{figa} will be in effect.

\begin{figure}[htb] 
\centerline
{\includegraphics[scale=0.9]{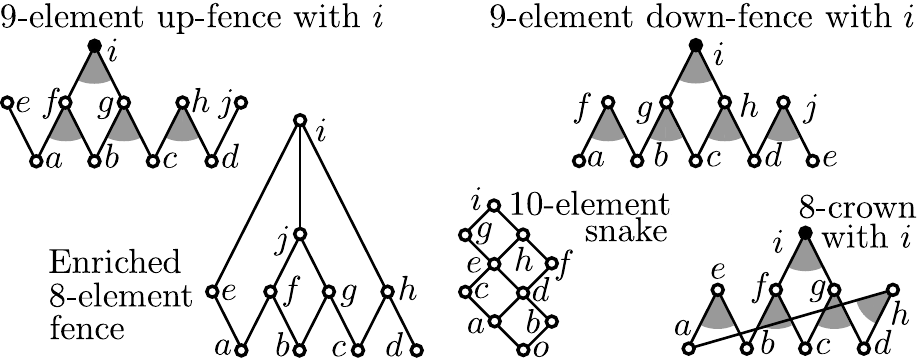}}
\caption{Fences, the 8-crown, and the 10-element snake; the empty-filled elements only
\label{figa}}
\end{figure}%

For the sake of contradiction, suppose that the 9-element down-fence, denoted here by $X$, is a subposet of $L$. Clearly, $a\vee b\leq f$. If 
$a\vee b < f$, then we can replace $f$ by $f':=a\vee b$; all the previous comparabilities and incomparabilities that are true for $f$ will remain true for $f'$. For example, if we had $f'\leq g$ then $a\leq f'$ would lead to $a\leq g$ by transitivity, but $a\not\leq g$ in $X$. In the next step, we omit $f$ and rename $f'$ as $f$. Hence, we can assume that $a\vee b=f$ in $L$. We can continue similarly, and finally we can assume that 
\begin{equation}
a\vee b=f,\quad b\vee c=g,\quad c\vee d=h,\quad d\vee e=j,\quad g\vee h=i.
\label{eqdnFnsgY}
\end{equation}
Note that these assumptions are indicated by grey-filled angles in Figure~\ref{figa}. Note also that these assumptions should be made in the order they are listed above; for example, we have to ``fix'' the joins at $g$ and $h$ before introducing the additional element $i:=g\vee h$. Clearly, 
\eqref{eqdnFnsgY} implies that 
\begin{equation}
b\vee h=i \text{ and } g\vee d=i;
\label{eqdnzFnsgpt}
\end{equation}
for example, $ b\vee h=b\vee c\vee h=g\vee h=i$.
Armed with the seven equalities given in \eqref{eqdnFnsgY} and \eqref{eqdnzFnsgpt}, $X\cup\set i$ turns into a partial groupoid $(X\cup\set i;\vee_X)$, which is a weak partial subalgebra of $\slat L$. Using our computer program, we obtain that $\many(X\cup\set i;\vee_X)=123.5$. So, by Lemma~\ref{lemmapartext}, $123.5\geq \many\slat L$, contradicting $\many\slat L>127$. Therefore, the 9-element down-fence cannot be a subposet of $\slat L$.

The argument for the 8-crown is almost the same; the only difference is that \eqref{eqdnFnsgY} and \eqref{eqdnzFnsgpt} are replaced by 
\begin{equation}
a\vee b=e,\quad a\vee d=h,\quad b\vee c=f,\quad c\vee d=g,\quad f\vee g=i.
\label{eqdnqTzsgY}
\end{equation}
and
\begin{equation}
b\vee g=i \text{ and } f\vee d=i, 
\label{eqdnzFnspknpW}
\end{equation}
and, in addition, now the $\many$-value of the partial groupoid $\set{a,b,\dots,i}$ is 125.

The treatment for the 9-element up-fence, denoted here by $Y$, is a bit more complex, but the argument begins in the same way as above. After modifying $f$, $g$, and $h$,  if necessary, and letting $i:=f\vee g$, we obtain a weak partial subgroupoid $(Y\cup\set i;\vee_Y)$ of $\slat L$, where
$\vee_Y$ and its domain are described by
\begin{align}
a\vee_Y b=f,\quad b\vee_Y c=g,\quad  c\vee_Y d=h,\quad  f\vee_Y g=i,\label{alYpra}\\
 a\vee_Y g=i,\quad\text{and}\quad f\vee_Y c=i.\label{alYprb}
\end{align}
Note that, in $L$, $a\vee g=a\vee b\vee g =f\vee g=i$, which explains the first equality in \eqref{alYprb}; the second one is explained similarly. In Figure~\ref{figa}, \eqref{alYpra} is visualized by grey-filled angles. 
Unfortunately, $\many(Y\cup\set i;\vee_Y)=137$ is rather large to draw any conclusion.  Hence, we need to deal with two cases. First, assume that $h\vee j=i$ in $L$. Then we add $h\vee_Y j =i$ and its consequence, $c\vee_Y j=i$  (explained by 
$c\vee j=c\vee d\vee j=h\vee j$) to the domain of $\vee_Y$, and $\many(Y\cup\set i;\vee_Y)$ turns out to be 122. Second, assume that $h\vee j=:k$ is distinct from $i$. Then we add $h\vee_Y j =k$ and its consequence, $c\vee_y j=k$  (explained again by 
$c\vee j=c\vee d\vee j=h\vee j$) to the domain of $\vee_Y$, and we obtain that $\many(Y\cup\set{i,k};\vee_Y)=114.25$. In both cases, we have obtained a weak partial subgroupoid of $\slat L$ such that the $\many$-value of this subgroupoid is at most 122. Hence, by Lemma~\ref{lemmapartext}, $\many\slat L$ is at most 122, contradicting our assumption that $\many\slat L>127$. Therefore, the 9-element up-fence cannot be a subposet of $\slat L$.

Next, let $Z$ denote the enriched 8-element fence. For the sake of contradiction, suppose that $Z$ is a subposet of $\slat L$. Observe that $Z$ is a join-semilattice, whereby the computer program, Lemma~\ref{lemmakey}, and our assumption on $\slat L$ yield that $78=\many\slat Z\geq \many\slat L>127$, which is a contradiction. Thus, the enriched 8-element fence cannot be a subposet of $\slat L$. Neither can the 10-element snake, because its $\many$-value is $125.5$  by the program and the same reasoning applies.  
\end{proof}

Now, we are in the position to complete the proof of our result.

\begin{proof}[Proof of Theorem~\ref{thmreform}]
Let $\slat L$ be a finite semilattice with $\many\slat L>127$. For the sake of contradiction, suppose that $L$ is not planar. Regardless whether $L$ has a smallest element or not, add a new bottom element 0 to $L$ and denote by $\poset{\plunul L}$ the poset we obtain in this way. So $\poset{\plunul L}$ is the disjoint union of $\set{0}$ and $L$, and $0<x$ for all $x\in L$. In another terminology,  
\begin{equation}
\text{$\poset{\plunul L}$ is the ordinal sum of the singleton poset and $L$.}
\label{eqtxtLNll}
\end{equation}
Clearly, 
with the ordering just defined,  $\slat{\plunul L}$ is also a join-semilattice; in fact, $\poset{\plunul L}$ is a lattice. Since $\Sub\slat{\plunul L}$ is the disjoint union of $\Sub\slat L$ and $\set{X\cup\set 0: X\in\Sub\slat L}$, it follows that 
\begin{equation}
\many\slat{\plunul L}=\many\slat L>127.
\label{eqplnlbBg}
\end{equation}
If $\slat{\plunul L}$ had a  diagram with non-crossing edges, then after deleting all edges starting from 0, we would obtain a planar digram of $\slat L$, and this would contradict our assumption on $\slat L$. Hence, $\lattice{\plunul L}$, that is, $\poset{\plunul L}$ is a non-planar lattice.
By Theorem~\ref{thmKR}, the Kelly--Rival Theorem, there exists a lattice $X=\poset X\in \krlist$, see \eqref{eqKRlist}, such that $\slat X$ is a subposet of $\slat{\plunul L}$. 

If $X=F_0$, then (the Key) Lemma~\ref{lemmakey} together with Lemma~\ref{lemmacomputed} give that
$\many\slat {\plunul L}\leq 127$, contradicting \eqref{eqplnlbBg}. If $X$ is another lattice occurring in Lemma~\ref{lemmacomputed}, then the contradiction is even bigger since $\many\slat X$ is smaller than 127.

If $X=A_1$ or $X\in\set{A_2, A_3, A_4,\dots}$, then 
$X$ and, thus, $\slat{\plunul L}$ contains the 8-crown or the 9-element down-fence as a subposet, which contradicts Lemma~\ref{lemmasnakefence}. If $X=E_n$ or $X=\dual E_n$ for some $n\geq 2$, then 
the 9-element up-fence or the 9-element down-fence
is a subposet of $\slat{\plunul L}$, and Lemma~\ref{lemmasnakefence} gives a contradiction again.
Since the enriched 8-element fence is a subposet of $F_2$ and each of $F_3$, $F_4$, \dots contains a 9-element up-fence as a subposet, Lemma~\ref{lemmasnakefence} excludes that $X\in\set{F_n: n\geq 2}$. Since the 10-element snake is a subposet of each of the $G_n$,  $n\geq 0$, we obtain from  Lemma~\ref{lemmasnakefence} that $X$ is not of the form $G_n$. (Note that $G_0$ is also taken care of by Lemma~\ref{lemmacomputed}.)  Finally, the possibility $X=H_n$ for some $n\geq 1$ is excluded again by the same lemma, since each of these $H_n$ contains the 10-element snake as a subposet. 

All $X\in \krlist$ have been ruled out, but this contradicts the existence of such an $X$. This proves the first sentence of Theorem~\ref{thmreform}.

Finally, we prove by induction that for $n\geq 9$, there exists an $n$-element \emph{non-planar} semilattice $\slat{L_n}$ such that $\many\slat{L_n}=127$. Define $\slat{L_9}:=\slat{F_0}$; we know from Lemma~\ref{lemmacomputed} that  $\many\slat{L_9}=127$. Since $\poset{L_9}=\poset{F_0}$ is a lattice, it is non-planar be the (Kelly--Rival) Theorem~\ref{thmKR}. 
For $n>9$, we let 
$\slat{L_{n}}:=\slat{\plunul L_{n-1}}$. Using the equality from \eqref{eqplnlbBg}, we obtain that 
 $\many\slat{L_n}=\many\slat{\plunul L_{n-1}}=
\many\slat{ L_{n-1}}=127$. Since $\slat{L_{n-1}}$
is non-planar, so is  $\slat{L_{n}}$; see the obvious sentence right after \eqref{eqplnlbBg}. Thus, we have constructed a semilattice $\slat{L_n}$ with the required properties for all $n\geq 9$ by induction. 
This completes the proof of Theorem~\ref{thmreform}.
\end{proof}

\begin{proof}[Proof of Theorem~\ref{thmmain}] Apply the first two sentences of Theorem~\ref{thmreform}
\end{proof}

\begin{proof}[Proof of Remark~\ref{remarksharp}]
Apply the last sentence of Theorem~\ref{thmreform}.
\end{proof}

\begin{figure}[htb] 
\centerline
{\includegraphics[scale=0.9]{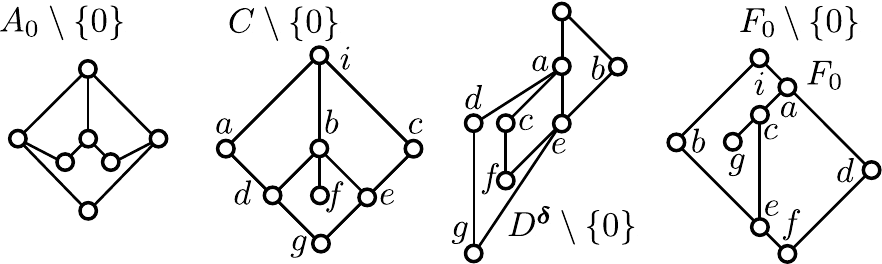}}
\caption{These join-semilattices are planar
\label{figb}}
\end{figure}%

\begin{proof}[Proof of Remark~\ref{remmsVnpL}]For the sake of contradiction, suppose that $L$ is a non-planar join-semilattice with at most seven elements. Then 
$\plunul L$, see \eqref{eqtxtLNll}, is a non-planar lattice and $|\plunul L|\leq 8$. Since $A_0$ is the only member of the Kelly--Rival list $\krlist$  with at most eight elements, it follows from (the Kelly--Rival) Theorem~\ref{thmKR} that $\plunul L=A_0$. But this is a contradiction since then $L=A_0\setminus\set 0$ is a planar poset; see Figure~\ref{figb}.

Since $|A_0|=8$, the equality  $\many\slat{A_0}=122$ from Lemma~\ref{lemmacomputed} means that $\slat{A_0}$ has exactly $121$ subsemilattices. Since $\poset{A_0}$ is also a lattice,  $\slat{A_0}$ is non-planar by (the Kelly--Rival) Theorem~\ref{thmKR}.

For the sake of contradiction again, suppose that $\slat L$ is an eight-element \emph{non-planar} semilattice with at least 122 subsemilattices, that is, $|\Sub\slat{L}|\geq 123$.  Then, as in \eqref{eqplnlbBg},  
$\many\slat {\plunul L}= \many\slat {L}\geq 123$ and $\poset{\plunul L}$ is a  non-planar lattice.  By 
(the Kelly--Rival) Theorem~\ref{thmKR}, some $X\in \krlist$ is a subposet of $\plunul L$. Clearly, $|X|\leq |\plunul L|=9$, and it follows from Lemma~\ref{lemmakey} that $\many\slat X\geq \many\slat {\plunul L}\geq 123$. 
Comparing these inequalities with Lemma~\ref{lemmacomputed}, which takes care of all at most nine-element members of $\krlist$ (and some larger members of $\krlist$), it follows that $X$ belongs to $\set{C, \dual D, F_0}$. 
Since $|X|=9=|\plunul L|$ and $X\subseteq \plunul L$ imply $\plunul L=X$,  we conclude that $\slat{L}$ is  obtained from some $\slat X\in \set{\slat C, \slat{\dual D}, \slat{F_0}}$ by removing its bottom element. Hence, $\slat L$ is one of the eight-element \emph{planar} join-semilattices given on the right of Figure~\ref{figb}, which is a contradiction.
\end{proof}

\clearpage
\centerline{\bf{{\LARGE APPENDIX}}}
\normalfont
\bigskip
\def\appendixhead{{Cz\'edli: One hundred twenty-seven subsemilattices / Appendix}}
\markboth\appendixhead\appendixhead

\noindent The rest of the paper is an appendix, which consists of the output file mentioned in the paragraph following Theorem~\ref{thmKR}.

\normalfont
\begin{verbatim}
Version of the input file: June 27, 2019
SUBSIZE version June 30, 2019 (started at 1:28:52) reports:
 [ Supported by the Hungarian Research Grant KH 126581,
                                (C) Gabor Czedli, 2018 ]


Semilattices and partial groupoids for the paper
One hundred twenty-seven subsemilattices and planarity

A_0 is the 8-element boolean join-semilattice with edges
 oa ab oc Ai Bi Ci aB aC bA bC cA cB
|A|=8, A(without commas)={oiabcABC}. Constraints:
a+b=C a+c=B a+A=i  b+c=A b+B=i  c+C=i  A+B=i   A+C=i   B+C=i
Result for A=A_0:  |Sub(A)| = 122, whence
sigma(A) = |Sub(A)|*2^(8-|A|) =  122.0000000000000000 .

Eight-crown
 ae ah be bf cf cg dg dh and also fi gi
|A|=9, A(without commas)={abcdefghi}. Constraints:
a+b=e a+d=h b+c=f c+d=g f+g=i
 b+g=i ; since b+g=b+c+g=f+g=i
 f+d=i ; since f+d=f+c+d=f+g=i
Result for A=Eight-crown:  |Sub(A)| = 250, whence
sigma(A) = |Sub(A)|*2^(8-|A|) =  125.0000000000000000 .

B is the join-semilattice with edges 
 oa ab oc od ae be bf bg cf dg ei fi gi
|A|=9, A(without commas)={oiabcdefg}. Constraints:
a+b=e  a+c=i  a+d=i  a+f=i  a+g=i  b+c=f  b+d=g  c+d=i
c+e=i c+g=i d+e=i  d+f=i e+f=i  e+g=i f+g=i
Result for A=B:  |Sub(A)| = 216, whence
sigma(A) = |Sub(A)|*2^(8-|A|) =  108.0000000000000000 .

The DUAL of B above; the edges of B are
 oa ab oc od ae be bf bg cf dg ei fi gi
|A|=9, A(without commas)={oiabcdefg}. Constraints:
a@b=o a@c=o  a@d=o  a@f=o  a@g=o  b@c=o  b@d=o  c@d=o c@e=o
c@g=o  d@e=o  d@f=o   e@f=b  e@g=b   f@g=b
Result for A=dual-B:  |Sub(A)| = 228, whence
sigma(A) = |Sub(A)|*2^(8-|A|) =  114.0000000000000000 .

C is the joinsemilattice with edges 
 ai bi ci da db eb ec fb gd ge og of
|A|=9, A(without commas)={oiabcdefg}. Constraints:
a+b=i a+c=i a+e=i a+f=i   b+c=i  c+d=i  c+f=i  d+e=b  d+f=b
e+f=b   f+g=b
Result for A=C:  |Sub(A)| = 246, whence
sigma(A) = |Sub(A)|*2^(8-|A|) =  123.0000000000000000 .

The DUAL of C above; the edges of C are
 ai bi ci da db eb ec fb gd ge og of
|A|=9, A(without commas)={oiabcdefg}. Constraints:
a@b=d a@c=g a@e=g a@f=o  b@c=e  c@d=g c@f=o  d@e=g d@f=o
e@f=o   f@g=o
Result for A=dual-C:  |Sub(A)| = 226, whence
sigma(A) = |Sub(A)|*2^(8-|A|) =  113.0000000000000000 .

D is the join-semilattice with edges 
 oa ob ac ae ad be cf dg ef eg fi gi
|A|=9, A(without commas)={oiabcdefg}. Constraints:
a+b=e   b+c=f b+d=g c+d=i c+e=f c+g=i  d+e=g d+f=i  f+g=i
Result for A=D:  |Sub(A)| = 232, whence
sigma(A) = |Sub(A)|*2^(8-|A|) =  116.0000000000000000 .

The DUAL of D above; the edges of D are
 oa ob ac ae ad be cf dg ef eg fi gi
|A|=9, A(without commas)={oiabcdefg}. Constraints:
a@b=o   b@c=o  b@d=o  c@d=a c@e=a c@g=a  d@e=a d@f=a  f@g=e
Result for A=dual-D:  |Sub(A)| = 248, whence
sigma(A) = |Sub(A)|*2^(8-|A|) =  124.0000000000000000 .

E_0 is the join-semilattice with edges 
 ai bi ci db ea ed fd fc gb oe of og
|A|=9, A(without commas)={oiabcdefg}. Constraints:
a+b=i a+c=i a+d=i a+f=i a+g=i  b+c=i  c+d=i c+e=i c+g=i
d+g=b e+f=d e+g=b  f+g=b
Result for A=E_0:  |Sub(A)| = 228, whence
sigma(A) = |Sub(A)|*2^(8-|A|) =  114.0000000000000000 .

The DUAL of E_0 above; the edges of E_0 are  
 ai bi ci db ea ed fd fc gb oe of og
|A|=9, A(without commas)={oiabcdefg}. Constraints:
a@b=e a@c=o a@d=e a@f=o a@g=o  b@c=f c@d=f c@e=o c@g=o
d@g=o e@f=o e@g=o  f@g=o
Result for A=dual-E_0:  |Sub(A)| = 220, whence
sigma(A) = |Sub(A)|*2^(8-|A|) =  110.0000000000000000 .

E_1 is the join-semilattice with edges
 ai bi ca da ei fb fc gc gd hd he ja of og oh oj 
|A|=11, A(without commas)={oiabcdefghj}. Constraints:
a+b=i a+e=i  b+c=i b+d=i b+e=i b+g=i b+h=i b+j=i
c+d=a c+e=i c+h=a c+j=a  d+e=i d+f=a d+j=a
e+f=i e+g=i e+j=i  f+g=c f+h=a f+j=a
g+h=d g+j=a  h+j=a
Result for A=E_1:  |Sub(A)| = 638, whence
sigma(A) = |Sub(A)|*2^(8-|A|) =   79.7500000000000000 .

The DUAL of E_1 above; the edges of E_1 are
 ai bi ca da ei fb fc gc gd hd he ja of og oh oj 
|A|=11, A(without commas)={oiabcdefghj}. Constraints:
a@b=f a@e=h  b@c=f b@d=o b@e=o b@g=o b@h=o b@j=o
c@d=g c@e=o c@h=o c@j=o
d@e=h d@f=o d@j=o  e@f=o e@g=o e@j=o
f@g=o f@h=o f@j=o  g@h=o g@j=o  h@j=o
Result for A=dual-E_1:  |Sub(A)| = 676, whence
sigma(A) = |Sub(A)|*2^(8-|A|) =   84.5000000000000000 .

F_0 is the join-semilattice with edges
 ai bi ca da eb ec fe fd gc of og
|A|=9, A(without commas)={oiabcdefg}. Constraints:
a+b=i  b+c=i b+d=i b+g=i  c+d=a  d+e=a  d+g=a  e+g=c  f+g=c
Result for A=F_0:  |Sub(A)| = 254, whence
sigma(A) = |Sub(A)|*2^(8-|A|) =  127.0000000000000000 .

F_1 is the join-semilattice with edges 
 oa od ab ac ah be bf cf cg dg ei fj gj hj ji
|A|=11, A(without commas)={oiabcdefghj}. Constraints:
a+d=g   b+c=f b+d=j b+g=j b+h=j  c+d=g c+e=i c+h=j
d+e=i d+f=j d+h=j  e+f=i e+g=i e+h=i e+j=i
f+g=j f+h=j  g+h=j
Result for A=F_1:  |Sub(A)| = 710, whence
sigma(A) = |Sub(A)|*2^(8-|A|) =   88.7500000000000000 .

G_0 is the join-semilattice with edges 
 oa ob ac ad ag bd ce de df eh ej fj gj hi ji
|A|=11, A(without commas)={oiabcdefghj}. Constraints:
a+b=d  b+c=e b+g=j  c+d=e c+f=j c+g=j  d+g=j  e+f=j e+g=j
f+g=j f+h=i  g+h=i  h+j=i
Result for A=G_0:  |Sub(A)| = 790, whence
sigma(A) = |Sub(A)|*2^(8-|A|) =   98.7500000000000000 .

H_0 is the join-semilattice with edges
 oa ob oc ad bd be bh cg df dg eg fi gi hi
|A|=10, A(without commas)={oiabcdefgh}. Constraints:
a+b=d a+c=g a+e=g a+h=i  b+c=g  c+d=g c+e=g c+f=i c+h=i
d+e=g d+h=i  e+f=i e+h=i  f+g=i f+h=i   g+h=i
Result for A=H_0:  |Sub(A)| = 398, whence
sigma(A) = |Sub(A)|*2^(8-|A|) =   99.5000000000000000 .

NO SUCCESS: 8fence is the join-semilattice with edges
 ae be bf cf cg dg dh fi gi
|A|=9, A(without commas)={abcdefghi}. Constraints:
a+b=e b+c=f c+d=g f+g=i
 b+g=i ; since b+g=b+c+g=f+g=i
 f+d=i ; since f+d=f+c+d=f+g
Result for A=8fence & i:  |Sub(A)| = 274, whence
sigma(A) = |Sub(A)|*2^(8-|A|) =  137.0000000000000000 .

9-element down-fence (and i) (a partial groupoid)
 af bf bg cg ch dh dj ej gi hi
|A|=10, A(without commas)={abcdefghji}. Constraints:
a+b=f b+c=g c+d=h d+e=j g+h=i
 b+h=i ; since b+h=b+c+h=g+h=i
 g+d=i ; since g+d=g+c+d=g+h=i
Result for A=9-element down-fence (and i):  |Sub(A)| = 494, whence
sigma(A) = |Sub(A)|*2^(8-|A|) =  123.5000000000000000 .

9-element up-fence; first attempt that fails
 ae af bf bg cg ch dh dj
|A|=10, A(without commas)={abcdefghji}. Constraints:
a+b=f b+c=g c+d=h f+g=i
 a+g=i ; since a+g=a+b+g=f+g=i
 f+c=i ; since f+c=f+b+c=f+g=i
Result for A=9-element up-fence &i(failure):  |Sub(A)| = 548, whence
sigma(A) = |Sub(A)|*2^(8-|A|) =  137.0000000000000000 .

9-element up-fence (a partial groupoid, Case 1)
 ae af bf bg cg ch dh dj  fi gi, and now hi ji for Case 1
|A|=10, A(without commas)={abcdefghji}. Constraints:
a+b=f b+c=g c+d=h f+g=i
 a+g=i ; since a+g=a+b+g=f+g=i
 f+c=i ; since f+c=f+b+c=f+g=i
     h+j=i ; Case 1
      c+j=i ; since c+j=c+d+j=h+j=i
Result for A=9-element up-fence & i; Case 1:  |Sub(A)| = 488, whence
sigma(A) = |Sub(A)|*2^(8-|A|) =  122.0000000000000000 .

9-element up-fence (a partial groupoid, Case 2)
 ae af bf bg cg ch dh dj, and now hk jk for Case2
|A|=11, A(without commas)={abcdefghjik}. Constraints:
a+b=f b+c=g c+d=h f+g=i
 a+g=i ; since a+g=a+b+g=f+g=i
 f+c=i ; since f+c=f+b+c=f+g=i
     h+j=k ; Case 2
      c+j=k ; since c+j=c+d+j=h+j=k
Result for A=9-element up-fence&i k; Case 2:  |Sub(A)| = 914, whence
sigma(A) = |Sub(A)|*2^(8-|A|) =  114.2500000000000000 .

Enriched 8-element fence (a join-semilattice)
 ae af bf bg cg ch dh fj gj ei ji hi
|A|=10, A(without commas)={abcdefghji}. Constraints:
a+b=f a+c=j a+d=i  a+g=j a+h=i
b+c=g b+d=i b+e=i b+h=i
c+d=h c+e=i c+f=j  d+e=i d+f=i d+g=i d+j=i
e+f=i e+g=i e+h=i e+j=i
f+g=j f+h=i  g+h=i  h+j=i
Result for A=Enriched 8-element fence:  |Sub(A)| = 312, whence
sigma(A) = |Sub(A)|*2^(8-|A|) =   78.0000000000000000 .

10-element snake; its edges are
 oa ob ac ad bd ce de df eg eh fh gi hi
|A|=10, A(without commas)={abcdefghio}. Constraints:
a+b=d  b+c=e  c+d=e c+f=h  e+f=h  f+g=i  g+h=i
Result for A=10-element snake:  |Sub(A)| = 502, whence
sigma(A) = |Sub(A)|*2^(8-|A|) =  125.5000000000000000 .

The computation took 78/1000 seconds.
\end{verbatim}
\normalfont

\end{document}